\newtheorem{thm}{Theorem}[section]
\newtheorem{cor}[thm]{Corollary}
\theoremstyle{definition}
\newtheorem{defn}[thm]{Definition}
\newtheorem{rmk}[thm]{Remark}
\newcommand{\til}[1]{\widetilde{#1}}
\newcommand{\bdry}{\partial}
\title{On spun-normal and twisted squares surfaces}
\author{Henry Segerman}
\begin{document}

\maketitle

\section{Introduction}

Yoshida in \cite{yoshida91} and Tillmann in \cite{tillmann_degenerations} describe different methods of producing surfaces within a 3-manifold $M$ with boundary from ideal points of the deformation variety of $M$, given a particular ideal tetrahedralisation of $M$.  The Yoshida construction builds a surface from twisted squares within the tetrahedra, the Tillmann construction results in a spun-normal surface relative to the tetrahedra. In this paper we investigate the connection between the methods. We show first that a twisted squares surface can be obtained from the spun-normal surface by an isotopy and possibly, two different types of compression moves (the usual disk compression moves, and compressions along certain annuli).\\

Both constructions, with a mild condition on the ideal point we start from, result in surfaces which although they may not be essential, can be reduced to essential surfaces by (disk) compressions and possibly deleting components\footnote{
There are two different senses of an "essential" surface in the literature which differ in whether or not they require the surfaces to be boundary incompressible or not. Yoshida does require boundary incompressibility (and refers to the surfaces as "incompressible and boundary incompressible") whereas Tillmann does not. We will use the version that does not require boundary incompressibility in this paper, although all of the arguments also go through with the other version.
}. Tillmann shows that if the ideal point of the deformation variety corresponds to an ideal point of the character variety then the corresponding spun-normal surface is dual to the ideal point of the character variety in the sense of Shalen~\cite{handbook_shalen}. We use Tillmann's result to show that the same is true for twisted squares surfaces produced by Yoshida's construction, and so any essential surface obtained from either the spun-normal or twisted squares construction in this case is also detected by the character variety. We use this in \cite{segerman_torus_bundles} to show that all non fiber and non semi-fiber incompressible surfaces in punctured torus bundles are detected by the character variety.\\

The author thanks Cameron Gordon and Stephan Tillmann for helpful discussions.

\subsection{The projective admissible solution space and spun-normal form}
Spun-normal surfaces are made up of two types of surface parts which sit within the tetrahedra of the ideal tetrahedralisation: triangles and quadrilaterals. The data that Tillmann uses to produce a spun-normal surface is the number and positioning of quadrilaterals in each tetrahedron, subject to the "Q-matching equations" (see Tillmann~\cite{tillmann_norm_surf}). The projective admissible solution space is the set of solutions to the Q-matching equations such that at most only one type (positioning) of quadrilateral appears in each tetrahedron (this makes the solution "admissible") and normalised so that the sum of the coordinates is one. Given such a solution with rational coordinate ratios, the solution is scaled again so that the coordinates are integers, with greatest common divisor of one, and these give us the number and type of quadrilaterals to use.\\

Theorem 2.4 of \cite{tillmann_norm_surf} states that an admissible integer solution of the Q-matching equations corresponds to a spun-normal surface. The surface is constructed by inserting the appropriate number of quadrilateral pieces and infinitely many triangle pieces in each tetrahedron, making sure that the boundaries of those pieces can be made to match up by isotoping them within each tetrahedron, then removing any boundary parallel components.

\subsection{The Yoshida construction}

The Yoshida construction in \cite{yoshida91} uses the same data (an admissible integer solution to the Q-matching equations, although Yoshida does not use this terminology) but in a different way to produce what we call a twisted squares surface. In place of a number of parallel quadrilaterals within a given tetrahedron, we put the same number of parallel twisted squares, see figure \ref{fig_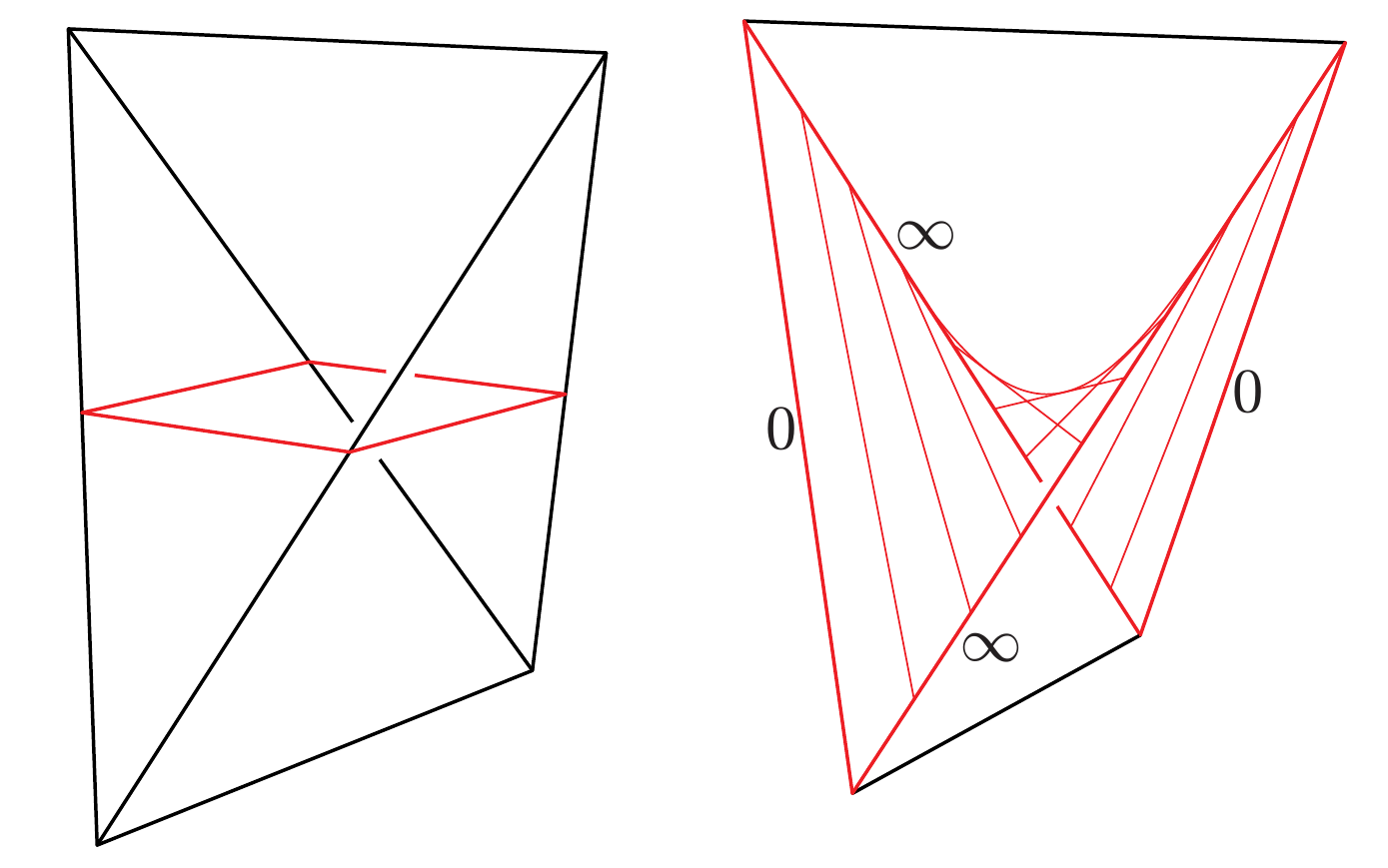}. \\

\begin{figure}[htbp]
\centering
\includegraphics[width=0.6\textwidth]{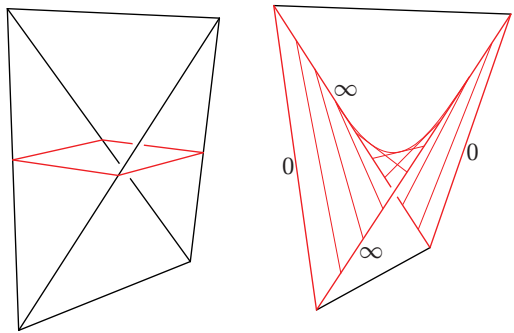}
\caption{A tetrahedron with a quadrilateral and a tetrahedron with the corresponding twisted square, 0 and $\infty$ edges labelled.}
\label{fig_quad_and_tw_sq.pdf}
\end{figure}

The Q-matching equations ensure that the pieces of a spun-normal surface match up around an edge of the tetrahedralisation. These equations correspond to the requirement in the Yoshida construction that the number of 0-edges matches the number of $\infty$-edges of twisted squares that meet at each edge of the tetrahedralisation\footnote{The slopes of corners of quadrilaterals incident at an edge go in either a right hand or left hand screw direction around the edge, the Q-matching equations state that the numbers of these are equal. After the first isotopy (see section \ref{isotopy1}) each right hand screw direction slope becomes a 0-edge of a twisted square, and each left hand screw direction slope becomes an $\infty$-edge.}. The Yoshida construction continues by removing parts of the twisted squares within a small tubular neighbourhood $\mathcal{N}_e$ of each edge $e$, then joining the resulting edges of the twisted squares to each other using long thin strips parallel to $e$ which each link a 0-edge to an $\infty$-edge. See figure \ref{fig_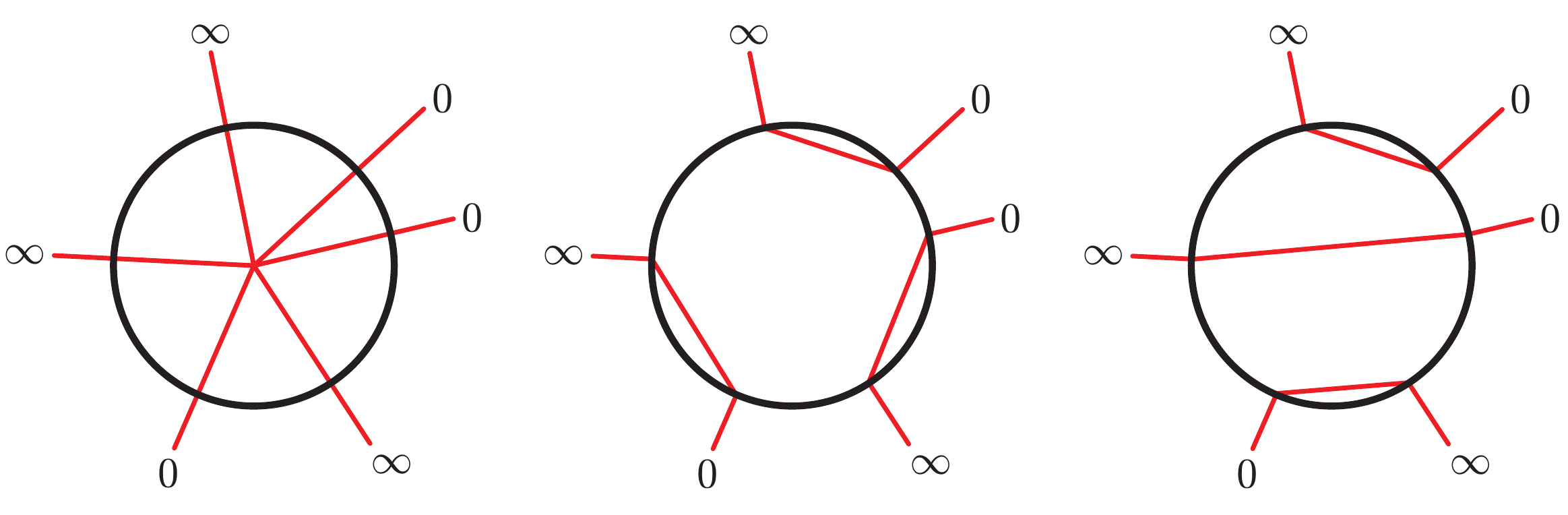} for an example which shows that there may be choices in this step of the construction.\\

\begin{figure}[htbp]
\centering
\includegraphics[width=1.0\textwidth]{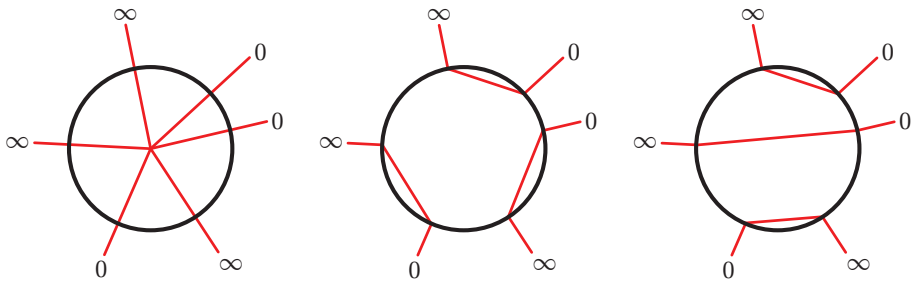}
\caption{Cross-section view of parts of twisted squares within a $\mathcal{N}_e$ before removing them, and two different ways to rejoin 0 and $\infty$ edges of twisted squares to each other with long thin strips.}
\label{fig_thin_strips.pdf}
\end{figure}

The last step of the construction deals with what happens near the cusp(s). First, we truncate all of our tetrahedra, which also cuts off the corners of the twisted squares. The resulting triangulation of the boundary of the manifold contains the boundaries of the twisted squares and boundaries of the thin strips, which form curves. We look for any null-homotopic curves here, and starting with the innermost such curve, we cap each off with a disk and push the disk inside the manifold. This completes the Yoshida construction of a twisted squares surface.

\begin{rmk}\label{double_covers}
Both Tillmann and Yoshida in their papers generating essential surfaces deal with any one-sided surfaces they generate by taking double covers, Tillmann by scaling the admissible rational solution to the Q-matching equations by the minimal integer to result in a two-sided surface, and Yoshida by taking a double cover of any one-sided components of the twisted squares surface generated. We do not assume these steps in the construction of spun-normal or twisted squares surfaces. They also both go on to perform compressions (and boundary compressions for Yoshida) in order to produce incompressible surfaces, and again we do not take these steps. 
\end{rmk}

\section{The Isotopy}

We begin with a surface $S$ in spun-normal form relative to an ideal tetrahedralisation $\mathcal{T}$ of a 3-manifold $M$ with boundary $\bdry M$. Let $\mathcal{E}$ be the edge set of $\mathcal{T}$ and let $\mathcal{N}_e$ be small disjoint tubular neighbourhoods of the edges $e \in \mathcal{E}$. See figure \ref{fig_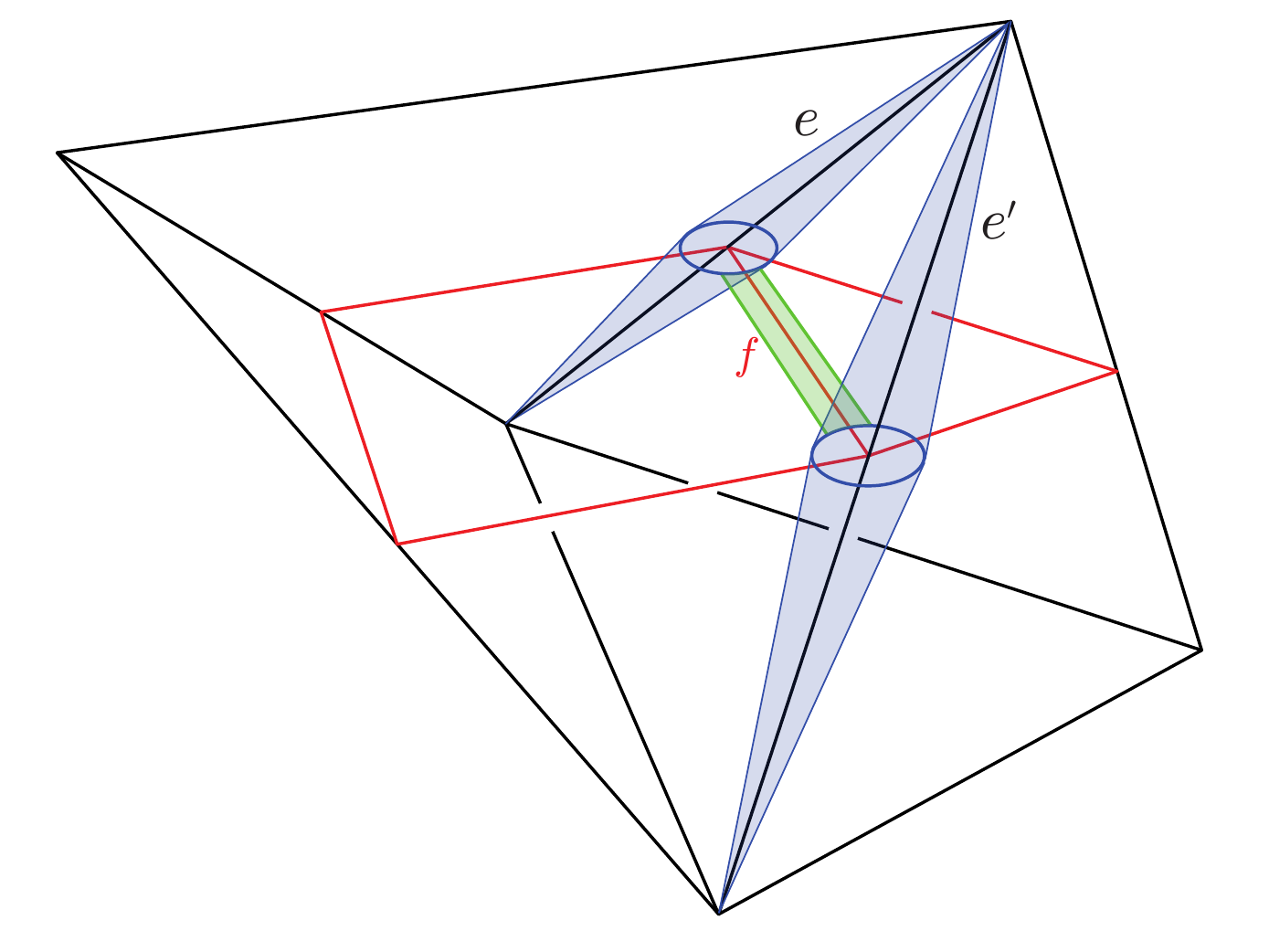}.\\

\begin{figure}[htbp]
\centering
\includegraphics[width=0.8\textwidth]{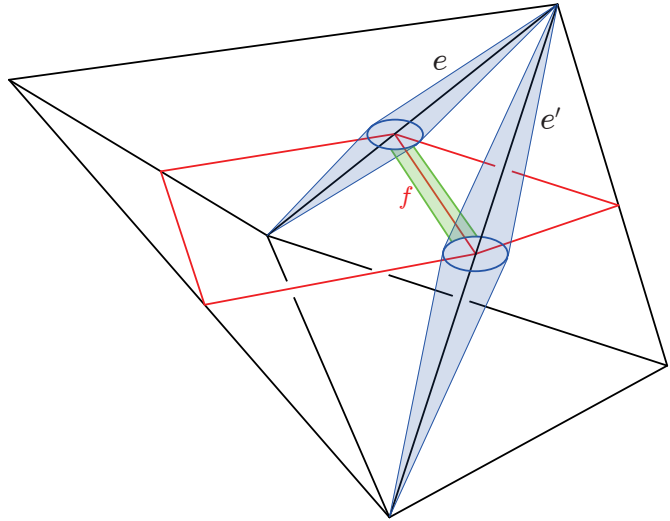}
\caption{Two tetrahedra with a quadrilateral and triangle. Labelled are two edges of $\mathcal{T}$, $e$ and $e'$. $\mathcal{N}_e$ and $\mathcal{N}_{e'}$ are shaded in blue. Also labelled is the edge between the quadrilateral and the triangle, $f$. $\mathfrak{N}_f$ is shaded in green.}
\label{fig_glossary_picture.pdf}
\end{figure}

We will describe an isotopy of $S$ which will convert all of the quadrilaterals in $S$ into twisted squares. The triangles of $S$ will correspond to either parts of disks which cap off null-homotopic boundary curves of the resulting twisted squares, or parts of the surface which spiral around into the cusp.

\subsection{First stage of the isotopy}\label{isotopy1}

\begin{figure}[htbp]
\centering
\includegraphics[width=\textwidth]{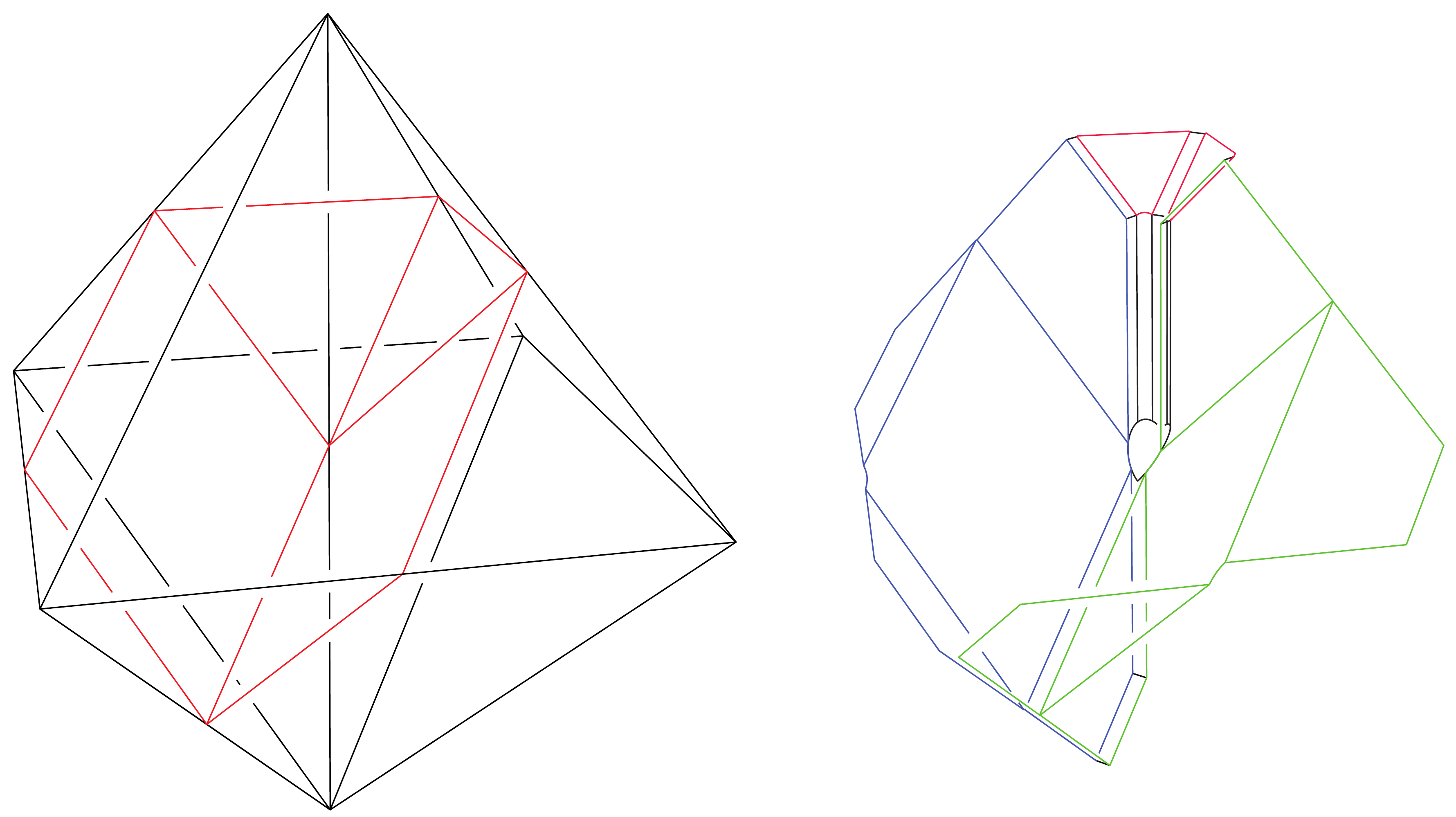}
\caption{Part of a surface in spun-normal form and the first stage of the isotopy.}
\label{fig_spunnorm_qqtt.pdf}
\end{figure}

See figure \ref{fig_spunnorm_qqtt.pdf}. In the left diagram we have four tetrahedra arranged around a central vertical edge, and part of a spun-normal surface, consisting of two quadrilaterals and two triangles. In the right diagram we have the first stage of the isotopy, with the tetrahedra removed for clarity. The two triangles are still shown in red, the two quads have become the saddle shapes shown in blue and green. We describe in general how to perform this first step as follows:\\

First, fix the parts of surface inside all the $\mathcal{N}_e$ (the components are all parallel disks that cut across the tube). Now each edge $f$ in the edge set of the spun-normal surface $S$ is a normal curve in a triangle of $\mathcal{T}$, and so there is one vertex of that triangle between the two ends of $f$. We will pull the surface near $f$ towards that vertex. At the same time we pull any triangle parts of the spun-normal surface towards the vertex they are nearest to.\\

For the $f$ in the edge set of the spun-normal surface let $\mathfrak{N}_f \subset S$ be small disjoint 2-dimensional neighbourhoods of each $f \setminus (\mathcal{N}_e \cup \mathcal{N}_{e'})$ where $e, e' \in E$ are the two edges of $\mathcal{T}$ at either end of $f$. See figure \ref{fig_glossary_picture.pdf}. This is the section of $S$ that we pull towards its vertex, together with the neighbourhoods of any $f'$ edges on parallel sheets of $S$. We also pull all parallel triangle parts of $S$ towards their vertex.  \\

We cannot of course break the surface as we pull parts of it towards the vertices, so as we pull we introduce strips in a small neighbourhood of the boundary between the $\mathfrak{N}_f$ and triangles, and the rest of the surface (the parts inside the $\mathcal{N}_e$ and the rest of the quadrilaterals) in order to maintain the connection between these parts. The quadrilaterals together with the added strips become twisted squares, and we obtain the right hand diagram of figure \ref{fig_spunnorm_qqtt.pdf}.\\ 

\subsection{Second stage of the isotopy}

In the second stage we need to deal with the surface parts inside the $\mathcal{N}_e$ in order that we join the twisted squares we get from the first stage to each other using thin strips parallel to the edges $e$.\\

In the example of figure \ref{fig_spunnorm_qqtt.pdf} we obtain two twisted squares that meet at the vertical edge. In the Yoshida construction, these are to be joined by a thin strip of surface, and in this case there is only one way to do it, and here the parts inside $\mathcal{N}_e$ and strips are isotopic (this is the second stage of the isotopy) to a single long thin strip parallel to the vertical edge and joining the two twisted squares.\\

\begin{figure}[htbp]
\centering
\includegraphics[width=\textwidth]{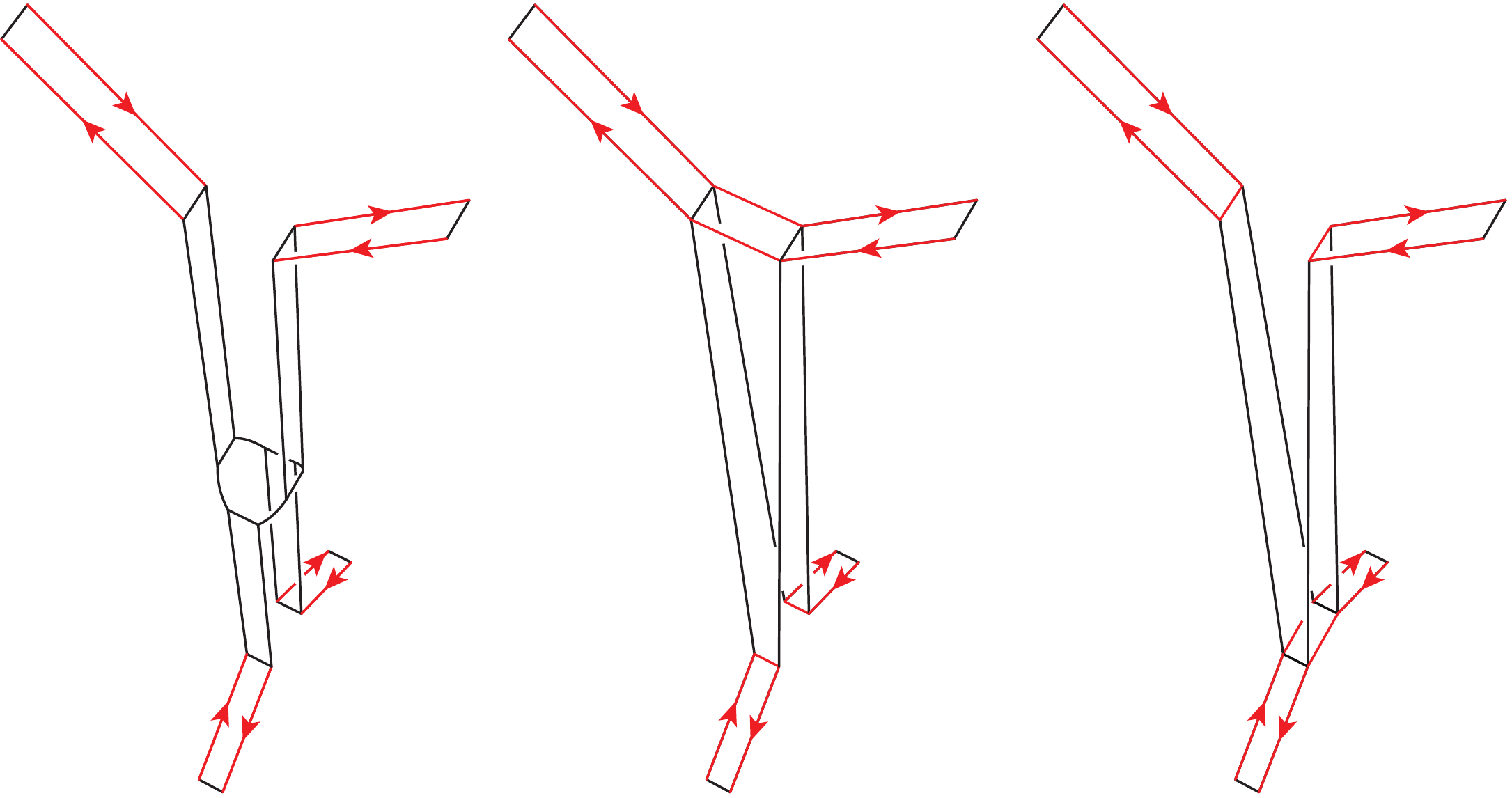}
\caption{Four quadrilateral pieces after the first stage of the isotopy, only the $\mathfrak{N}_f$ and the disk within $\mathcal{N}_e$ are shown. The two ways to convert this into thin strips parallel to $e$. The arrows correspond to the orientations of the boundary of the twisted squares on $\bdry M$ in the Yoshida construction.}
\label{fig_spunnorm_saddle.pdf}
\end{figure}

A more complicated example is shown in figure \ref{fig_spunnorm_saddle.pdf}. Here four quadrilaterals met at the vertical edge, and the left hand diagram shows the picture after the first isotopy. Figure \ref{fig_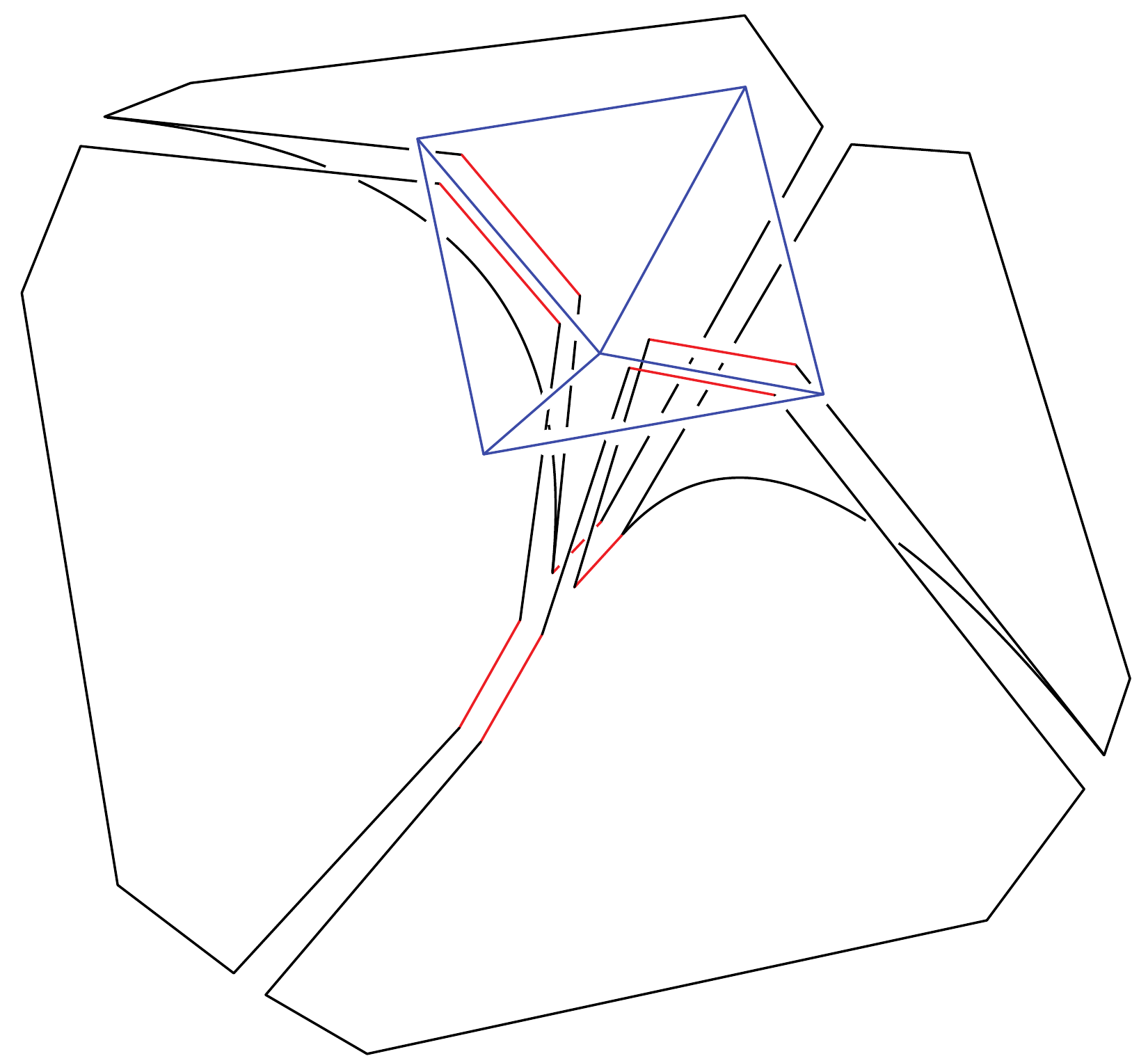} shows the twisted squares we get. Here, we get a choice as to how to deal with the disk within the $\mathcal{N}_e$ in the second part of the isotopy: we can either push it up, as in the center diagram, or down, as in the right hand diagram. These correspond to the two ways to join the four edges of the twisted squares with thin strips parallel to the vertical edge.\\

\begin{figure}[htbp]
\centering
\includegraphics[width=0.5\textwidth]{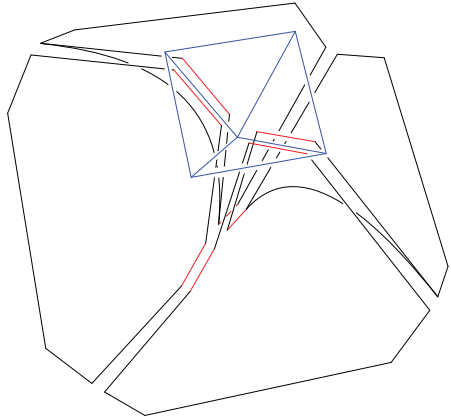}
\caption{Four twisted squares. The truncated ends of the tetrahedra at the top vertex are shown.}
\label{fig_4_twisted_squares.pdf}
\end{figure}

In general we will have some number of parallel disks in each $\mathcal{N}_e$, each with strips attached to the edge making the disk into a saddle of some order. The second stage of the isotopy consists of pushing the various disk to one end of the edge or the other (in fact some disks can be split apart into two or more regions, each of which can go to either end of the edge). We will investigate this in the next section.

\section{Choices in the Yoshida construction}

\begin{figure}[htbp]
\centering
\includegraphics[width=1.0\textwidth]{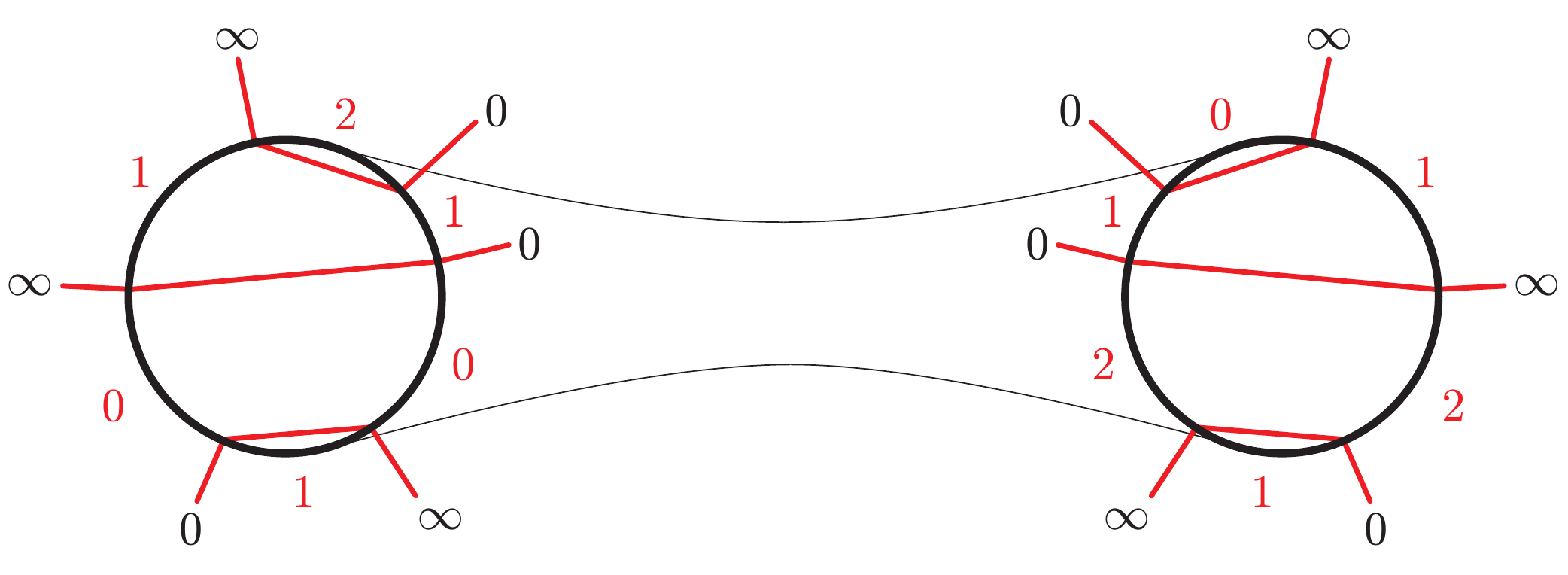}
\caption{Two ends of a $\mathcal{N}_e$ with a choice of long thin strips.}
\label{fig_thin_strips_push_saddles.pdf}
\end{figure}

The choices made in constructing a given Yoshida form surface are of which edges of twisted squares to join with each other. Given a choice of those joins we will decide how to push each disk of the surface after the first stage of the isotopy in order to realise those joins. Consider figure \ref{fig_thin_strips_push_saddles.pdf}, which shows both ends of a $\mathcal{N}_e$ with a choice of long thin strips. Note of course that the picture at one end is a mirror reflection of the other. We have added (in red) integers around each end, with an integer in between each sheet of the twisted squares meeting at $e$. Moving anti-clockwise around the end, the red integer increases by one when we pass a 0 incoming edge, and decreases by one for each $\infty$ edge. Because the number of 0 edges is equal to the number of $\infty$ edges, these integers can be chosen consistently, and if we require that the smallest integer is 0 then they are uniquely defined.\\

If the maximum number we reach is $n$, then the corresponding number at the other end of $e$ to an integer $k$ is $n-k$. Notice that $n$ is also the number of disks that sit in the center of $e$ after the first step of the isotopy from spun-normal form. Observe also that the regions of the disk at an end of $\mathcal{N}_e$ is cut into a number of regions by the ends of the long thin strips, and the integers on the boundary of a region are all the same, so we associate that integer with the whole region. This information tells us how to push the disks from the center of $e$ out to the ends: we deal with each region of the disks independently. For a region of the end with associated integer $k$, we push $k$ sheets to this end and $n-k$ sheets to the other. The above observations show that we can consistently do this, and we end up with the desired long thin strips.\\

We now find ourselves at the end of the second isotopy from spun-normal form: we have isotoped so that all quadrilaterals are now twisted squares, joined to each other through the desired choice of long thin strips. 

\begin{thm}\label{same away from bdry}
Given a twisted squares surface $T$ and the spun-normal surface $S$ produced from the same data, $T$ and $S$ may be isotoped so that outside of a small neighbourhood of $\bdry M$ in $M$ the two surfaces coincide.
\end{thm}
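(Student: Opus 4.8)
The plan is to build the required isotopy out of the two stages already described in Section~\ref{isotopy1} and the section following it, and then to pin down the small neighbourhood of $\bdry M$ outside of which the resulting surface agrees with $T$. First I would apply the two-stage isotopy to $S$. In its second stage there is a choice, for each disk sitting in a tube $\mathcal{N}_e$, of which end of $e$ to push it towards; as explained in the section on choices in the Yoshida construction, these choices correspond exactly to the ways of joining the edges of the twisted squares by long thin strips parallel to $e$. I would make the choices that reproduce precisely the long thin strips used in building the given surface $T$. After this isotopy, $S$ is a union of three kinds of pieces: the twisted squares obtained from the former quadrilaterals; the long thin strips parallel to the edges of $\mathcal{T}$; and the (infinitely many) triangle pieces, each of which the isotopy has pulled towards the ideal vertex it was nearest to, together with the thin connecting strips introduced in the first stage that attach those triangles to the rest of the surface.

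The crux is then to verify that, possibly after a further isotopy supported near $\bdry M$, all of the triangle pieces and all of the connecting strips meeting them lie inside an arbitrarily small product neighbourhood $\mathcal{N} \cong \bdry M \times [0,1)$ of $\bdry M$ in $M$. The triangles incident to a fixed ideal vertex stack up into a spiral accumulating on the corresponding cusp, and pulling them towards the vertex tightens this spiral; one must check that it can be tightened into an arbitrarily thin collar while keeping $S$ properly embedded and carrying the connecting strips along with it. Granting this, outside $\mathcal{N}$ the isotoped surface $S$ is precisely the union of the twisted squares and the long thin strips.

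It remains to compare with $T$. By construction $T$ is obtained from these same twisted squares and long thin strips by first truncating the tetrahedra, which removes only the corners of the twisted squares and only within an arbitrarily small neighbourhood of $\bdry M$, and then capping off the null-homotopic curves that appear on the truncated $\bdry M$ by disks pushed slightly into $M$, which may again be taken inside an arbitrarily small collar. Enlarging $\mathcal{N}$ to contain the truncated corners and the capping disks, we get that outside $\mathcal{N}$ the surface $T$ is also exactly the union of the (now untruncated) twisted squares and long thin strips. Because the choices were matched these coincide with the corresponding pieces of $S$, so $S$ and $T$ agree on $M \setminus \mathcal{N}$, which is what the theorem asserts. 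Any boundary-parallel components removed when $S$ was put into spun-normal form, and any components that Yoshida's construction would later discard, are isotopic into $\bdry M$ and so may be absorbed into $\mathcal{N}$ as well, hence do not affect the comparison.

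I expect the main difficulty to be the middle step: making rigorous the claim that the infinite spiral of triangle pieces, together with their attaching strips and the interaction of those strips with the long thin strips that run the full length of the edges, can be isotoped into an arbitrarily small collar of $\bdry M$ without destroying embeddedness. The rest is bookkeeping with the isotopy from the previous sections and with the steps of Yoshida's construction.
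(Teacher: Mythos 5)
Your proposal is correct and follows essentially the same route as the paper: perform the two-stage isotopy with the choices in the second stage matched to the long thin strips used to build $T$, and then observe that everything on which the two surfaces differ (the triangle parts and $\mathfrak{N}_f$ strips of the isotoped $S$, and the truncated corners and capping disks of $T$) lies in a product neighbourhood of $\bdry M$. The paper packages this slightly differently---it deletes the $\mathfrak{N}_f$ and triangle parts to get a surface $S''$ whose boundary lies near $\bdry M$ and then chooses product neighbourhoods $N_S$ and $N_T$---but this is the same argument, and your extra attention to pushing the spiralling triangle pieces into the collar only makes explicit a point the paper leaves implicit.
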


\begin{proof}
The above isotopy of $S$ produces a surface $S'$. Now delete from $S'$ all $\mathfrak{N}_f$ and triangle parts.  We obtain a surface $S''$ which has boundary in the interior of the manifold $M$, but whose boundary is all close to $\bdry M$. Choose a suitable product neighbourhood $N_S$ of $\bdry M$ that contains the boundary of $S''$. Choose a suitable product neighbourhood $N_T$ of $\bdry M$ which contains any capped off boundary curves of $T$. Then $S'' \setminus (N_S \cup N_T)$ is identical to $T \setminus (N_S \cup N_T)$.
\end{proof}

\section{Compressions}

We now need to understand the parts of the surface parallel to the boundary, which is made out of parts that were triangles in spun-normal form and the strips $\mathfrak{N}_f$. For simplicity we restrict from now on to manifolds $M$ such that $\bdry M$ is a union of tori. We consider the curves $C$ forming the boundary of $S''$ from the proof of theorem \ref{same away from bdry}. Let $N$ be a closed product neighbourhood of $\bdry M$ such that $\bdry N$ contains all of the curves on the boundary of $S''$. We may put $S'$ in the same picture in such a way that all $\mathfrak{N}_f$ and triangle parts are contained in $N$ and their intersection with $\bdry N$ consists only of $C$. The key observation is that the pattern of the curves $C$ against the triangulation of $\bdry N$ determines how and if those curves are connected together within $N$ by the $\mathfrak{N}_f$ and triangles.\\

\begin{figure}[htbp]
\centering
\includegraphics[width=0.4\textwidth]{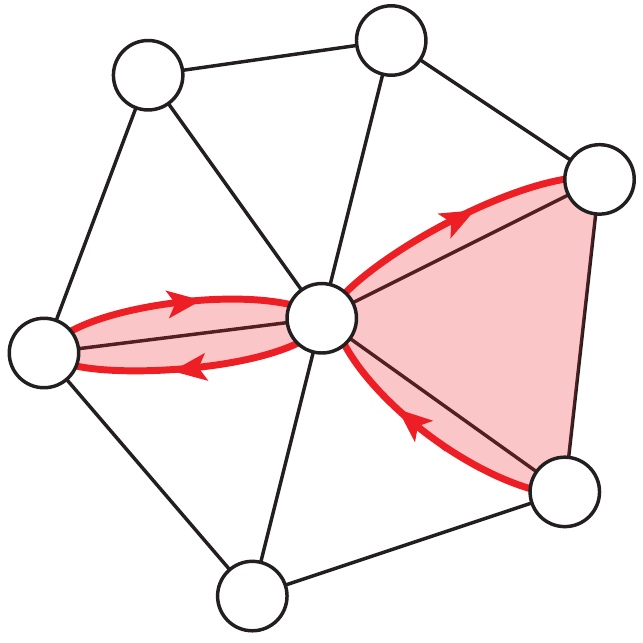}
\caption{Parts of $C$ on $\bdry N$.}
\label{fig_tris_and_strips.pdf}
\end{figure}

In figure \ref{fig_tris_and_strips.pdf} we see an example of a small part of the triangulation on $\bdry N$. The curves $C$ consist of the boundaries of twisted squares (which were once quadrilateral pieces), shown as red arcs here in the triangular end of the tetrahedron containing the twisted square, alternating with the ends of thin strips within $\mathcal{N}_e$ which are not shown here. The strips $\mathfrak{N}_f$ join onto these arcs crossing over the edges of the triangulation. At the other side of the $\mathfrak{N}_f$ can be either another arc boundary of a twisted square (as on the left of the figure) or a triangle (as on the right). Also in figure \ref{fig_tris_and_strips.pdf} we have oriented the arcs in such a way that the arrow points anti-clockwise around the triangle within which the arc is. With this orientation we can also see that the $\mathfrak{N}_f$ and triangle parts meet the arc coming from the right of the arrows. Note that the orientation here is the same as the orientation of the boundary of the twisted squares on $\bdry M$ in the Yoshida construction, and that when we link the arcs at the $\mathcal{N}_e$ the orientation of the arcs linked agree. Beware however that this orientation is not induced by an orientation of the surface.

\begin{defn}
A {\bf boundary annulus} for a surface $F$ in a 3-manifold $M$ with non-empty boundary $\bdry M$ is an annulus $A$ embedded in $M$ with boundary components $\alpha \subset F$ and $\beta \subset \bdry M$ and such that $A \cap F = \alpha$ and $A \cap \bdry M = \beta$.
\end{defn}
We also define a compression along a boundary annulus analogously to compression along a disk. That is, we cut the surface along $\alpha$, where it intersects the annulus and stitch in two parallel copies of the annulus. This move introduces two boundary components to the surface. See figure \ref{fig_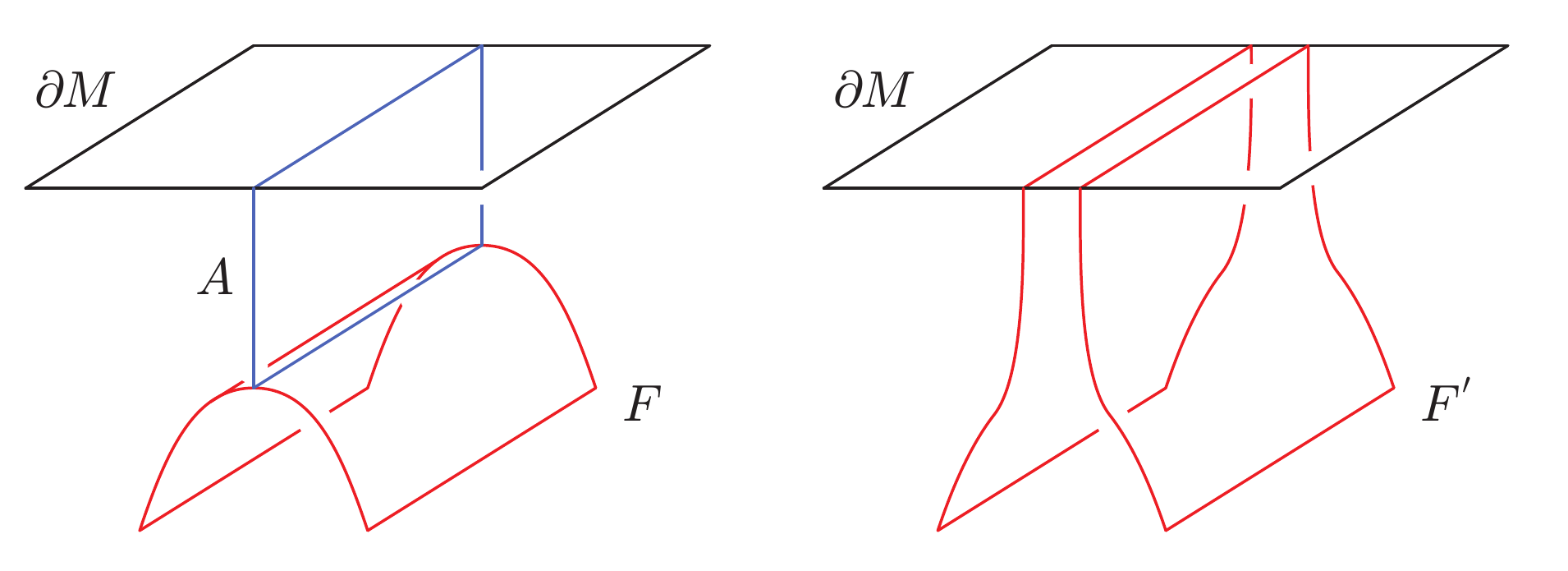}.

\begin{figure}[htbp]
\centering
\includegraphics[width=0.9\textwidth]{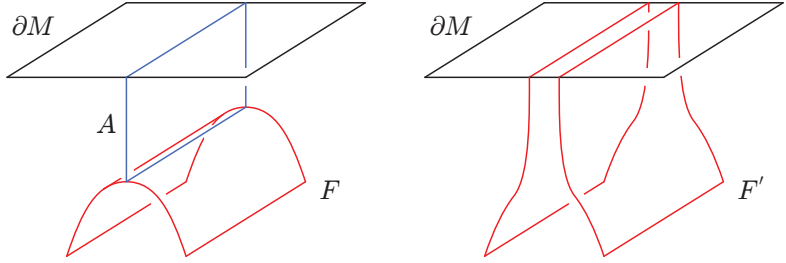}
\caption{A boundary annulus $A$ and the result of compression along it.}
\label{fig_boundary_annulus_compression.pdf}
\end{figure}

\begin{thm}
\label{get_any_yoshida}
For a 3-manifold $M$ with $\bdry M$ a union of tori, if $T$ is a twisted squares surface and $S$ the spun-normal surface produced by the same data then $T$ can be obtained from $S$ by the correct choices made of isotopy and a finite number of compressions along disks and boundary annuli, and removing boundary parallel components. \end{thm}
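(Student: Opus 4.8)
\noindent\emph{Proof proposal.} The plan is to push the whole difference between $S$ and $T$ into a collar of $\bdry M$ and then remove it with a bounded number of compressions. First I would apply Theorem~\ref{same away from bdry} to isotope $S$ to $S'$ so that $S'$ and $T$ agree outside a closed product neighbourhood $N\cong\bdry M\times[0,1]$ of $\bdry M$, chosen as in the proof of that theorem so that both surfaces are obtained from the common subsurface $S''$ by attaching pieces inside $N$: up to isotopy $S' = S''\cup R_S$ and $T = S''\cup R_T$, where $R_S$ is the union of the strips $\mathfrak{N}_f$ and the triangle parts, $R_T$ is the union of the capping disks, and both are glued to $S''$ along (parts of) the curve system $C\subset\bdry N$. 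It then suffices to convert $R_S$ into $R_T$ inside $N$ using isotopy, compressions along disks and boundary annuli, and deletion of boundary-parallel components.

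The second and main step is the combinatorial analysis announced in the paragraph preceding the theorem: the pattern of $C$ on the triangulated torus $\bdry N$, together with the anti-clockwise orientation of the twisted-square arcs and the fact that each such arc carries exactly one strip $\mathfrak{N}_f$ attached on the right of its arrow, determines the connected components of $R_S$. I would show that each component of $R_S$ is of exactly one of two kinds. A \emph{finite} component is a compact subsurface $E\subset N$ built from finitely many triangle parts and strips whose attaching arcs lie along null-homotopic curves of $C$, which $E$ caps off; dually, $R_T$ contains exactly one disk capping each null-homotopic curve of $C$. An \emph{infinite} component $P$ is built from triangle parts spiralling monotonically into a cusp and accumulating on a boundary-slope curve of $\bdry M$; these correspond exactly to the components of $C$ essential on $\bdry M$, which form the honest boundary of $T$. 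Setting up this dictionary — that the null-homotopic curves of $C$ are precisely those capped ``from the inside'' in $S'$, and that the spiralling is governed by the boundary slope — is where the orientation bookkeeping and the Q-matching equations do the real work, and is the step I expect to be the main obstacle.

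Granting the dictionary, the removal proceeds as follows. For each infinite component $P$, there is a curve $\alpha\subset S'$ separating $P$ from the rest of $S'$ that is parallel on $\bdry M$ to a boundary-slope curve $\beta$, cobounding with $\beta$ an annulus $A\subset N$ lying between the spiral and $\bdry M$ with $A\cap S' = \alpha$; compressing $S'$ along the boundary annulus $A$ caps the main part of the surface with one copy of $A$ — producing the boundary curves of $T$ there — and splits $P$ off, joined to the other copy of $A$, as a boundary-parallel annular component of the collar, which we discard. This disposes of all essential curves of $C$. For each finite component $E$ capping a null-homotopic $\gamma$: if $E$ is already the disk $T$ has there, isotopy suffices; otherwise, compressing $S'$ along a suitable finite system of disks meeting $E$ reduces $E$ to the prescribed capping disk, discarding any boundary-parallel pieces that split off. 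Working from innermost curves outward keeps all the compressing disks and annuli embedded and disjoint, mirroring the innermost-first capping in Yoshida's construction. Since $\mathcal{T}$ is finite there are finitely many components of $R_S$ and hence finitely many moves, and the resulting surface is isotopic to $T$. The ``correct choices of isotopy'' in the statement are the choices, made in constructing the given Yoshida surface, of which twisted-square edges to join by thin strips; these determine which isotopy of $S$ is used in the first step, hence the components of $R_S$.
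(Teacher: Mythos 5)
Your overall framing (push the difference into the collar, classify the pieces attached to the curve system $C$, then remove them by the allowed moves) matches the paper, but your ``dictionary'' for the essential curves of $C$ is wrong, and the error lands exactly on the step that forces boundary-annulus compressions into the statement. You assert that every component of $R_S$ attached to an essential curve of $C$ is an infinite spiral into the cusp. The paper's proof distinguishes two possibilities for an essential curve: either the attached piece is unbounded and spirals into the cusp, or it is a \emph{compact} annulus inside the collar joining that curve to another parallel essential curve of $C$ with the opposite orientation (this is precisely the phenomenon recorded by the orientation bookkeeping, and it is singled out again in the final section: parallel curves with opposite orientations occur if and only if boundary-annulus compressions are needed). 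Your recipe for essential curves --- compress along a boundary annulus and discard the split-off ``tail'' --- does not apply in that second case: the compact annulus is a tunnel whose two ends both attach to the main surface $S''$, so there is no tail to discard, and discarding one side after cutting would delete a non-boundary-parallel piece of the surface. The correct move there (as in the paper) is to compress along a boundary annulus whose curve $\alpha$ is the \emph{core} of that tunnel annulus, outermost first, keeping both resulting sheets; nothing is removed.

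Two smaller discrepancies. For the genuinely spiralling ends the paper uses only an unwinding isotopy, with no compression; your compression-plus-discard there is harmless under the wording of the theorem (the split-off piece lies in the collar), but it is an unnecessary move and obscures the fact that compressions are only ever forced by the null-homotopic ``wrong-way'' curves and by the opposite-orientation annuli. And for the null-homotopic curves your ``suitable finite system of disks'' should be sharpened to the paper's dichotomy: curves whose arcs are oriented clockwise are already capped on the inside by the triangles and strips (no move needed), while anti-clockwise curves have their triangles and strips running outwards as a funnel, and a single disk compression per curve (innermost first) caps the curve as in $T$ and removes the funnel; the leftover boundary-parallel tori that can arise when only this second type occurs are then deleted, which is the source of the ``removing boundary parallel components'' clause.
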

\begin{proof}
We start by isotoping $S$ and orienting the curves $C$ as in the discussion at the start of this section. Each curve on each torus falls into one of 3 categories, and we deal with each of them in order:

\begin{enumerate}
\item Null-homotopic curves with arrows pointing in the clockwise direction.
\item Null-homotopic curves with arrows pointing in the anti-clockwise direction.
\item Some number of parallel non null-homotopic curves.
\end{enumerate}
In case 1, the $\mathfrak{N}_f$ and triangles connected to such a curve cap off the curve in the same way as in the twisted square construction and so no compression move needs to be done.\\

In case 2 the $\mathfrak{N}_f$ and triangles connected to the curve go outwards from the curve rather than inwards. We perform a compression move on the disk bounded by this curve (innermost first if necessary) which caps off the curve on the inside, as in $T$. It also caps off the "outside" surface, removing a funnel going down into the interior of the manifold. \\

We are left with only the curves in case 3. There are two possibilities for what the part of surface that meets one of these curves now looks like. Either it is unbounded and spins infinitely many times around the torus boundary towards the cusp, or it is bounded, and is an annulus joining this curve to another parallel non null-homotopic curve. Note that the orientations on these two curves must be opposite for this possibility to occur.\\

In the first case we can "unwind" the spinning with an isotopy which leaves the surface in the same state as if we had simply deleted all of the triangles and $\mathfrak{N}_f$ strips and replaced them with an annulus connecting the curve on $\bdry N$ straight out to $\bdry M$.\\

In the second case, in order to recover $T$ we need to cut the annulus and move the resulting edges out to $\bdry M$. We can do this by doing a compression (outermost first if necessary) along a boundary annulus with one edge on the core of the annulus we are cutting and the other on $\bdry M$.\\

Finally, it is possible if there were curves in case 2 above but none in case 3 that we may have some boundary parallel tori parts of our surface left over, which we delete. The resulting surface is $T$.\\
\end{proof}

\section{Actions on trees}

For this section, we use the phrase "$F$ is dual to an action of $\pi_1M$ on a tree $\mathfrak{T}$" in the same sense as Shalen~\cite{handbook_shalen}, section 2.2. That is, we can construct the surface $F$ in the following way: We have a tree $\mathfrak{T}$, $E$ the set of midpoints of edges of $\mathfrak{T}$ and a map $\til{f}:\til{M}\rightarrow \mathfrak{T}$ which is $\pi_1M$ equivariant and transverse to $E$, and $\til{f}^{-1}(E) = p^{-1}(F)$, where $p$ is the covering projection from $\til{M}$ to $M$.

\begin{thm}\label{dual_same}
If $M$ is an oriented 3-manifold with $\bdry M$ a union of tori and $S$ is a two-sided\footnote{In the context of \cite{handbook_shalen} we work with $\pi_1M$ acting on $\mathfrak{T}$ without inversions, and by 2.3.3 of \cite{handbook_shalen} any such dual surfaces must be two-sided.} spun-normal surface dual to a given action of $\pi_1M$ on a tree $\mathfrak{T}$ and $T$ is a twisted squares surface produced from the same data then $T$ is also dual to the action. 
\end{thm}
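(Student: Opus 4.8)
The plan is to read the result off from Theorem~\ref{get_any_yoshida}, which exhibits $T$ as obtained from $S$ by a finite sequence of four kinds of move: ambient isotopy, compression along a disk, compression along a boundary annulus, and deletion of a boundary-parallel component. So it will suffice to show that each of these four moves, applied to a two-sided surface dual to a given action of $\pi_1M$ on $\mathfrak{T}$, produces another two-sided surface dual to the \emph{same} action; the theorem then follows by induction along the sequence, starting from $S$. First I would note that each of the four moves preserves two-sidedness (in the oriented manifold $M$ a compression of a two-sided surface along a disk or an annulus is again two-sided, and isotopy and deletion of components obviously are), so the hypotheses of Shalen's setup, in particular that $\pi_1M$ acts without inversions, stay in force at every stage, and the intermediate surfaces all make sense as dual surfaces.

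The three easy moves I would dispatch as follows. For an ambient isotopy $\phi_t$ of $M$ with $\phi_1(S)=S'$, lift $\phi_t$ to a $\pi_1M$-equivariant isotopy $\til\phi_t$ of $\til M$ and replace $\til f$ by $\til f\circ\til\phi_1^{-1}$; this is still equivariant and transverse to $E$, with preimage of $E$ equal to $p^{-1}(S')$ (the ``unwinding'' isotopies near the cusps in Theorem~\ref{get_any_yoshida} are ordinary isotopies and are handled the same way). For a compression along a disk $D$ with $\bdry D\subset S$ and $\mathrm{int}\,D\cap S=\emptyset$, a lift $\til D$ is a disk with connected boundary $\bdry\til D\subset\til f^{-1}(E)$, so, $E$ being discrete, $\til f(\bdry\til D)$ is a single midpoint $m$; one then homotopes $\til f$, equivariantly over the $\pi_1M$-orbit of $\til D$ (the disks of the orbit being disjoint) and supported near that orbit, so as to push $\til D$ off $m$ onto one of the two half-edges at $m$, and the preimage of $E$ becomes $p^{-1}$ of the compressed surface. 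Deletion of a boundary-parallel component is the standard tidying-up of a dual surface (see \cite{handbook_shalen}, section~2.2): picking an innermost such component $S_0$, which with $\bdry M$ cobounds a product region $W\cong T^2\times I$ whose interior misses $S$, one homotopes $\til f$ across the lifts of $W$ — rel $\bdry\til M$ and rel the complement of those lifts — to remove precisely the sheets of $\til f^{-1}(E)$ lying over $S_0$, using that the stabiliser $\Z^2$ of such a lift fixes the midpoint(s) met by the corresponding lift of $S_0$.

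The genuinely new case, and the step I expect to require the most care, is compression along a boundary annulus $A$, with $\bdry A=\alpha\cup\beta$, $\alpha\subset S$ and $\beta\subset\bdry M$. Exactly as in the disk case, a lift $\til\alpha$ of the core curve is connected — a circle if $\alpha$ is null-homotopic in $M$, otherwise a line — and lies in $\til f^{-1}(E)$, so $\til f(\til\alpha)$ is a single midpoint $m$. The new feature is that the stabiliser of $\til\alpha$ is now generated by a conjugate of $[\alpha]\in\pi_1M$ and it fixes $m$; when $[\alpha]\neq 1$ this says a peripheral element fixes an edge of $\mathfrak{T}$, which is exactly what the no-inversions hypothesis allows. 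I would then homotope $\til f$, equivariantly and supported near the $\pi_1M$-orbit of $\til A$, to push $\til A$ off $m$ onto one adjacent half-edge; this deletes from $\til f^{-1}(E)$ the single sheet lying over a neighbourhood of $\alpha$ and inserts two parallel sheets lying over the two push-off copies $A',A''$ of $A$, i.e.\ replaces $p^{-1}(S)$ by $p^{-1}(S')$, where $S'$ is the boundary-annulus compression of $S$. Since $\beta\subset\bdry M$, the two new boundary curves of $S'$ lie on $\bdry M$, so $S'$ is again a properly embedded surface with boundary on $\bdry M$, and the modified $\til f$ exhibits it as dual to the same action. The two points needing checking are (i) that the equivariant homotopy can be carried out compatibly over the whole orbit of $\til A$, which holds because the annuli arising in Theorem~\ref{get_any_yoshida} are disjoint and embedded with embedded orbit, and (ii) that the resulting preimage of $E$ is \emph{exactly} $p^{-1}(S')$, not merely isotopic to it, which is immediate from the product structure of a neighbourhood of $A$ in $M$. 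Once these are in place, the induction closes and $T$ is dual to the given action.
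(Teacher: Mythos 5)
Your proposal is correct and takes essentially the same approach as the paper: reduce via Theorem \ref{get_any_yoshida} to showing that each move (isotopy, disk compression, boundary-annulus compression, deletion of boundary-parallel components) preserves duality, rely on Shalen's section 2.4 arguments for the standard moves, and treat the new boundary-annulus move by a $\pi_1M$-equivariant homotopy of $\til{f}$ supported near the lifts of the annulus --- the paper describes this homotopy in the domain (pushing $\til{F}$ out to $\bdry \til{M}$ through a tubular neighbourhood of $A$), while you describe it in the tree. The one detail to watch is the direction of your push: writing $m=\til{f}(\til{\alpha})$, the interior of $\til{A}$ already maps off $m$ into one adjacent star, and to get the stated new preimage of $E$ (the sheet through $\til{\alpha}$ deleted, two push-off copies of $A$ inserted) you must push the image of a neighbourhood of $\til{A}$ \emph{across} $m$ into the star on the far side, not onto the near half-edge (pushing to the near side would remove the sheet through $\til{\alpha}$ without creating the two new sheets, violating transversality) --- this is exactly what the paper's domain-side ``stretching'' of the region of $\til{N}\setminus\til{F}$ accomplishes automatically.
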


\begin{proof}
In section 2.4 of \cite{handbook_shalen}, Shalen shows how to modify a surface dual to an action by a (disk) compression to produce a new surface which is also dual to the action. He also shows how to remove boundary parallel components. We need to show that the other modification (boundary annulus compression) results in surfaces dual to the action.\\

If $F$ is a surface dual to an action of $\pi_1M$ on $\mathfrak{T}$ with a boundary annulus $A$ as in figure \ref{fig_boundary_annulus_compression.pdf}, we can modify $\til{f}$ as follows: Let $N$ be a small tubular neighbourhood of $A$ in $M$ (see figure \ref{fig_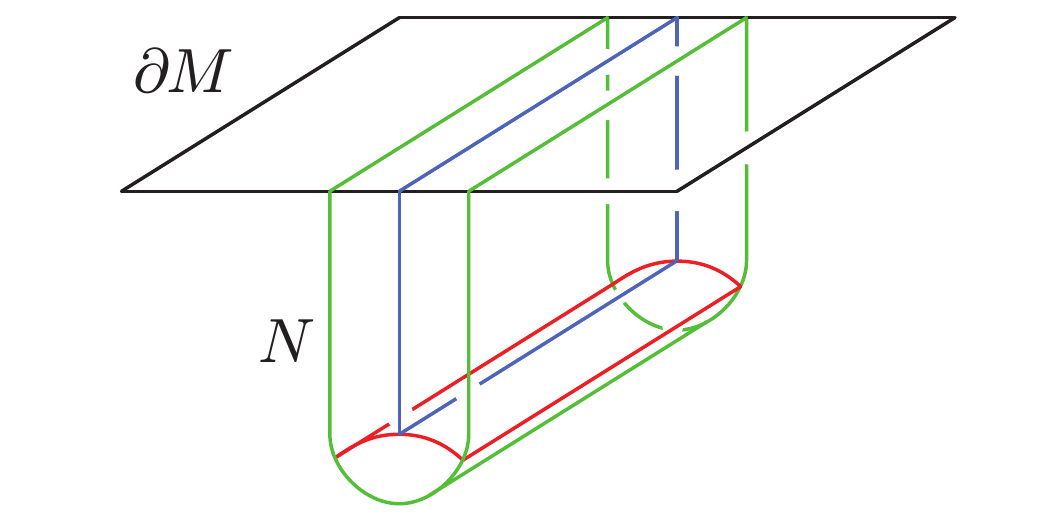}). Lift $F$, $A$ and $N$ to $\til{F}$, $\til{A}$ and $\til{N}$ in $\til{M}$. Now let $\til{f}'$ be identical to $\til{f}$ outside of $\til{N}$ and be related by a $\pi_1M$ equivariant homotopy to $\til{f}$ inside $\til{N}$ which pushes $\til{F}$ out to the boundary, stretching the parts of $\til{N} \setminus \til{F}$ that do not touch $\bdry \til{M}$ until it does touch $\bdry \til{M}$. The resulting $\til{f}'$ is $\pi_1M$ equivariant and with a little care can be made to be transverse to $E$. The resulting dual surface $F'$ is obtained from $F$ by a boundary annulus compression.
\end{proof}

\begin{figure}[htbp]
\centering
\includegraphics[width=0.6\textwidth]{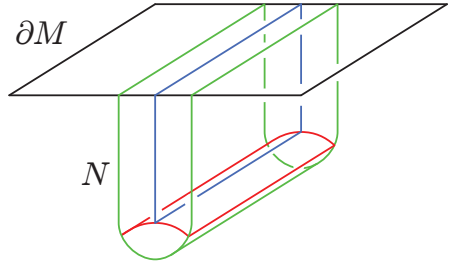}
\caption{A small tubular neighbourhood of a boundary annulus, the part of the surface $F$ outside of $N$ is not shown.}
\label{fig_boundary_annulus_nbd.pdf}
\end{figure}

\begin{thm}\label{twisted_squares_detected}
Let $M$ be an oriented 3-manifold with $\bdry M$ a union of tori with ideal triangulation $\mathcal{T}$ and $T$ a two-sided twisted squares surface obtained via Yoshida's construction from an ideal point of the deformation variety $\mathfrak{D}(M, \mathcal{T})$ which corresponds to an ideal point of the character variety. Then any essential surface obtained from $T$ by compressions is detected by the character variety.
\end{thm}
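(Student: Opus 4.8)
The plan is to deduce this as a corollary of Theorem~\ref{dual_same}, of Tillmann's theorem~\cite{tillmann_degenerations} that the spun-normal surface at such an ideal point is dual to the associated action on a tree, and of Shalen's result that compressions preserve duality. Let $S$ denote the spun-normal surface produced from the same data as $T$. The first thing I would check is that $S$ is two-sided: two-sidedness of a properly embedded surface in the oriented manifold $M$ is equivalent to orientability, and since deleting a collar of $\bdry M$ does not change the homotopy type of such a surface, two-sidedness is determined by the surface's restriction to the complement of a small neighbourhood of $\bdry M$. By Theorem~\ref{same away from bdry}, $S$ and $T$ may be isotoped to have the same such restriction, so $S$ is two-sided because $T$ is.

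Next, since the ideal point of $\mathfrak{D}(M, \mathcal{T})$ corresponds to an ideal point of the character variety $X(M)$, Tillmann's theorem gives that the two-sided spun-normal surface $S$ is dual --- in the sense recalled at the start of this section --- to the action of $\pi_1 M$ on the tree $\mathfrak{T}$ associated to that ideal point of $X(M)$. Theorem~\ref{dual_same} then applies, with this $S$, and shows that $T$ is dual to the same action of $\pi_1 M$ on $\mathfrak{T}$. (One could equivalently reach this conclusion through Theorem~\ref{get_any_yoshida}: $T$ is obtained from $S$ by isotopy, disk compressions, boundary-annulus compressions and deletions of boundary-parallel components, and each of these preserves being dual to the action --- isotopy trivially, disk compressions and deletions of boundary-parallel components by section 2.4 of \cite{handbook_shalen}, and boundary-annulus compressions by the argument in the proof of Theorem~\ref{dual_same}.)

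Finally, let $F$ be an essential surface obtained from $T$ by a finite sequence of disk compressions. By section 2.4 of \cite{handbook_shalen}, starting from a surface dual to the action on $\mathfrak{T}$, each disk compression, and each deletion of a boundary-parallel or otherwise inessential component, produces a surface again dual to the action on $\mathfrak{T}$; iterating, $F$ is dual to the action of $\pi_1 M$ on $\mathfrak{T}$. Since $\mathfrak{T}$ is the tree arising from an ideal point of the character variety, being dual to this action is exactly what it means for $F$ to be detected by the character variety, so $F$ is detected.

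I do not expect a serious obstacle, since the statement is essentially a synthesis of results already in place. The point most deserving of care --- and where I expect the bookkeeping to be subtlest --- is the two-sidedness of $S$, together with the related question of whether Tillmann's duality statement is really available for $S$ itself rather than for some rescaling of the underlying data (a double cover); the argument above via Theorem~\ref{same away from bdry} is meant to settle this by showing that data producing a two-sided $T$ produces a two-sided, hence directly Shalen-dual, $S$. It is also worth making explicit that ``detected by the character variety'' means precisely that $F$ is dual to the $\pi_1 M$-action on the tree associated to some ideal point of $X(M)$, which is what turns the final step into a tautology once the duality of $F$ has been established.
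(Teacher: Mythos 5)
Your overall strategy (Tillmann's Proposition 23 gives duality of the spun-normal surface $S$, Theorem \ref{dual_same} transfers duality to $T$, and Shalen's section 2.4 shows compressions and deletion of inessential components preserve duality) is the same as the paper's. The gap is in your very first step: the claim that two-sidedness of $T$ forces two-sidedness of $S$. Your argument for this rests on the assertion that deleting a neighbourhood of $\bdry M$ removes only a collar of the surface, so that two-sidedness is determined by the part of the surface away from $\bdry M$. That is true for $T$ (its pieces inside such a neighbourhood are disk caps and annular collars running out to $\bdry M$), but it fails for $S$: a spun-normal surface can meet the neighbourhood $N$ of $\bdry M$ in compact annuli whose \emph{both} boundary circles lie on curves of $\bdry S''$ --- this is exactly the second possibility in case 3 of the proof of Theorem \ref{get_any_yoshida}, the configuration that the boundary-annulus compressions are introduced to undo. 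Attaching an annulus along two boundary circles of an orientable surface can produce a non-orientable surface, so the restriction of $S$ to the complement of $N$ does not determine its orientability, and Theorem \ref{same away from bdry} does not let you conclude that $S$ is two-sided. Consequently Tillmann's duality statement (which needs $S$ two-sided, hence bicollared) is not directly available, and your proof is incomplete in precisely the case it flags as the subtle one.

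The paper handles this case explicitly rather than arguing it cannot occur: if $T$ is two-sided but $S$ is not, double the admissible integer solution. The doubled data produces the two-sided double cover of $S$, to which Tillmann's result and Theorem \ref{dual_same} apply, showing that the doubled twisted squares surface --- which, since $T$ is already two-sided, consists of two parallel copies of $T$ --- is dual to the action on the tree. One then runs the Culler--Shalen construction with these two parallel copies and takes a single parallel component at the final step, recovering the essential surface obtained from $T$ by compressions. To repair your proposal you should either supply this doubling argument or give a genuine proof that the one-sided-$S$/two-sided-$T$ case cannot arise, which your homotopy-type argument does not do.
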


Here correspondence between an ideal point of the deformation and character varieties means that as we approach the ideal point in the deformation variety, the corresponding characters approach an ideal point of the character variety. See \cite{tillmann_degenerations} section 2.5 for more detail on the map between the varieties. We say that a surface is detected by the character variety if it can be produced by the Culler-Shalen construction from an ideal point of the character variety (see \cite{handbook_shalen}). As in remark \ref{double_covers}, we do not follow Yoshida~\cite{yoshida91} in taking double covers of only components that are not two-sided, but rather double up all components, or equivalently only consider integer solutions that result in two-sided twisted squares surfaces.  

\begin{proof}[Proof of theorem \ref{twisted_squares_detected}]
First assume that the spun-normal surface $S$ obtained from the same data as we use to build $T$ is two-sided. Proposition 23 of Tillmann \cite{tillmann_degenerations} gives us (although not in this language) that the spun-normal surface corresponding to an ideal point of the deformation variety that corresponds to an ideal point of the character variety is dual to the action on a tree associated to the ideal point of the character variety (we need that $S$ is two-sided and hence bicollared for this). The data used to build $T$ from the ideal point is an admissible integer solution to the Q-matching equations, which is the same as for Yoshida's construction, so $T$ and $S$ are related as in theorem \ref{dual_same}. Therefore $T$ is also dual to the ideal point of the character variety, and so is any essential surface obtained from $T$ by compressions.\\

If we are in the case that $T$ is two-sided but $S$ is not, then doubling all of the numbers of quadrilaterals in the construction of the spun-normal surface results in the double cover of $S$ which is two-sided, and so the above argument goes through. The result gives us that the double cover of $T$ (which is what we get by doubling all of the numbers of twisted squares) is dual to the ideal point of the character variety. Since $T$ was two-sided already, this surface consists of two parallel components for each component of $T$. We follow the Culler-Shalen construction through with two parallel copies until at the very end, at which point we may take only one parallel component, and again we produce the essential surface obtained from $T$ by compressions.
\end{proof}
\begin{rmk}
It isn't clear if the two constructions (spun-normal and twisted squares surfaces) from an ideal point of the deformation variety can always produce the same essential surfaces, but the above results (and primarily Tillmann's result) do show that in the case that there is a corresponding ideal point of the character variety, all of the essential surfaces we get from either construction can also be obtained by the Culler-Shalen construction. However,  explicitly carrying out the algorithms seem to be more feasible for the two deformation variety constructions.
\end{rmk}

\section{Incompressible surfaces in general}
We can also draw a connection with incompressible surfaces in general (as opposed to specifically those produced by some degeneration of the geometric structure) using the following theorem:
\begin{thm}[{Theorem 1.5 of Walsh \cite{walsh_incomp}}]
Let $M$ be a atoroidal, acylindrical, irreducible, compact three-manifold with torus boundary components, and $\mathcal{T}$ an ideal triangulation of $M$ with essential edges. Let $S$ be a properly embedded, two-sided, incompressible surface in $M$ that is not a virtual fiber. Then $S$ can be isotoped to be in normal or spunnormal form in $(M, \mathcal{T})$.
\end{thm}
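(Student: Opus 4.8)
The plan is to run a Haken--Kneser-style normalisation argument adapted to ideal triangulations, using a minimal-complexity representative of the isotopy class of $S$ and allowing the surface to spiral (``spin'') into the cusps. Truncate the ideal tetrahedra so that $M$ becomes a union of truncated tetrahedra: $\bdry M$ is then tiled by the vertex triangles of $\mathcal{T}$, the hexagonal faces assemble into a $2$-skeleton, and a normal (resp.\ spun-normal) surface is one meeting each truncated tetrahedron in normal triangles and quadrilaterals, with only finitely many pieces in total (resp.\ with finitely many quadrilaterals together with triangles spiralling coherently towards a vertex triangle). If $S$ is closed the classical normalisation theorem for triangulated manifolds applies essentially verbatim, so the substance is in the case $\bdry S \neq \emptyset$. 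First I would reduce to the case that $S$ is boundary-incompressible: a boundary compression preserves incompressibility, and since $M$ is atoroidal, acylindrical and irreducible, iterating boundary compressions cannot stabilise on an essential annulus or torus and cannot terminate with an essential disk or sphere, so the process stops at a boundary-incompressible incompressible surface (the degenerate outcomes --- a boundary-parallel surface, or the empty surface --- are excluded by acylindricity together with the hypothesis that $S$ is not a virtual fibre, or can be treated directly). Fix also a disjoint family of cusp neighbourhoods once and for all.

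Next comes the local straightening. Put $S$ in general position with respect to the $1$- and $2$-skeleton of the truncated triangulation. Using innermost disks together with the irreducibility of $M$ and the incompressibility of $S$, remove every simple closed curve of $S \cap \mathcal{T}^{(2)}$ that bounds a disk in a face; using outermost arcs together with boundary-incompressibility, remove every arc of $S \cap \mathcal{T}^{(2)}$ with both endpoints on a single edge. (The essential-edges hypothesis is what guarantees that no edge cobounds a bigon with $\bdry M$, so that the normal-curve combinatorics in the vertex triangles and hexagonal faces behaves as in the closed case and these removal moves are actually available.) After these moves every component of the intersection of $S$ with a truncated tetrahedron is a disk whose boundary is a normal curve: a length-three boundary curve straightens to a normal triangle, a length-four curve to a normal quadrilateral or else admits a complexity-reducing isotopy across an edge, and a longer boundary curve must ``return'' around some edge, where the standard edge-swap move either strictly decreases the complexity or pushes intersection material towards a cusp.

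Finally I would take $S$ to minimise an appropriate complexity --- essentially the weight, i.e.\ the number of intersections of $S$ with the $1$-skeleton in the compact core, refined by a secondary invariant to break ties --- within its isotopy class, and argue that the minimiser is normal or spun-normal. Every straightening move above strictly reduces this complexity, with a single exception: the repeated pushing of accumulating pieces off a vertex triangle of a cusp need not terminate, and when it does not one must show that $S$ limits onto the standard spun-normal configuration near that cusp (finitely many quadrilaterals and a coherently spiralling family of triangles) rather than onto some mixed or oscillating pattern. This is the crux, and it is exactly here that the ``$S$ is not a virtual fibre'' hypothesis is used: for a fibre the complement of $S$ is an $I$-bundle, the weight can be decreased indefinitely by spinning further around a cusp without a spun-normal limit ever being reached, and no minimum is attained; the non-fibre hypothesis rules this out and, with a further appeal to atoroidality and acylindricity to discard any inessential or annular/toral intermediate pieces, forces the minimiser to be normal or spun-normal. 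I expect this termination-and-limiting analysis at the cusps, and in particular pinning down the precise role of the ``not a virtual fibre'' hypothesis in it, to be the main obstacle; the remainder is the classical normalisation machinery transported to the ideal setting.
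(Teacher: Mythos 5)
This statement is quoted from Walsh's paper and is not proved here, so the comparison has to be with Walsh's argument, and your proposal takes a genuinely different route which, as written, has real gaps. The most concrete one is the opening reduction: the theorem asserts that $S$ \emph{itself} can be isotoped into (spun-)normal form, so you are not free to ``reduce to the case that $S$ is boundary-incompressible'' by performing boundary compressions --- a boundary compression changes the surface (typically even its isotopy class and topology), and nothing in your argument lets you undo it afterwards. Spun-normal surfaces need not be boundary-incompressible, and the hypothesis list deliberately omits boundary-incompressibility, so this reduction is both illegitimate and unnecessary in the form you use it (you then rely on it for the outermost-arc step). The second and decisive gap is the one you yourself flag: the termination/limiting analysis at the cusps. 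A Haken--Kneser least-weight scheme in an ideal triangulation has no obvious terminating or convergent behaviour near the vertices --- material can be pushed around a cusp forever --- and your proposed mechanism for the ``not a virtual fiber'' hypothesis (complement an $I$-bundle, hence no weight minimum) is a heuristic, not an argument; as it stands the crux of the theorem is asserted rather than proved.

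Walsh's actual proof is geometric rather than combinatorial, and this is exactly where the hypotheses you treat as bookkeeping do their work. Atoroidal, acylindrical, irreducible with torus boundary gives (by Thurston's hyperbolization) a complete finite-volume hyperbolic metric on the interior of $M$; by the Thurston--Bonahon dichotomy an incompressible surface that is not a virtual fiber is geometrically finite (quasi-Fuchsian), so each plane of the preimage of $S$ in $\mathbb{H}^3$ has limit set a Jordan curve in the sphere at infinity. The essential-edges hypothesis lets one straighten the ideal triangulation so edges become geodesics with distinct ideal endpoints, and the spun-normal form of (an isotoped, spun) $S$ is then read off from how each limit curve separates the four ideal vertices of each lifted tetrahedron: two-and-two gives a quadrilateral, one-and-three gives triangles, and the spinning at the cusps appears automatically from the parabolic ends. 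So the ``not a virtual fiber'' hypothesis enters to secure geometric finiteness, not to control a weight-minimization, and the atoroidal/acylindrical hypotheses enter through hyperbolization, not through discarding annular pieces in a compression hierarchy. If you want to pursue a purely combinatorial normalization instead, you would need to supply precisely the convergence-at-the-cusps argument you have deferred, and that is known to be the hard point.
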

\begin{cor}
With the above notation and conditions, there exists a twisted squares surface $T$ embedded in $M$ such that $T$ can be obtained from $S$ by a number of boundary annulus compressions.
\end{cor}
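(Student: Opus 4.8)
The plan is to read the quadrilateral data off $S$, feed it into Yoshida's construction to obtain $T$, and then invoke Theorem~\ref{get_any_yoshida}; the point is that, because $S$ is essential, the only moves of that theorem which actually occur are boundary annulus compressions.

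First I would discard any boundary-parallel torus components of $S$, which are inessential and whose removal preserves the hypotheses. By the theorem of Walsh stated above, the remaining surface may be isotoped into normal or spun-normal form relative to $\mathcal{T}$; since normal form is the special case of spun-normal form in which no spinning occurs, the two are handled uniformly. Reading off the number and position of the quadrilateral pieces gives an admissible integer solution of the Q-matching equations, and, after removing boundary-parallel components, $S$ is exactly the spun-normal surface produced from this data, the triangle pieces and the spinning being forced by the quad data. Feeding the same data, together with any choice of how to join the edges of the resulting twisted squares, into Yoshida's construction produces an embedded twisted squares surface $T$.

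By Theorem~\ref{get_any_yoshida}, with the correct choices $T$ is obtained from $S$ by an isotopy, finitely many compressions along disks and along boundary annuli, and the deletion of some boundary-parallel components. In the notation of that theorem's proof, a disk compression is performed precisely at a curve of $C$ of case~2, the boundary annulus compressions are the case~3 moves, and leftover boundary-parallel tori appear only if some case~2 curve is present. So it suffices to show that an essential surface in spun-normal form produces no case~2 curves --- indeed no null-homotopic curves among $C$ at all, so that cases 1 and 2 are vacuous and every curve of $C$ is non null-homotopic and treated as in case~3. To see this I would take an innermost null-homotopic curve $C_0$ among $C$, lying on some torus component of $\partial N$: it bounds a disk $D\subset\partial N$ with $D\cap S'=C_0$, so $D$ is a compressing disk for $S'$ unless $C_0$ bounds a disk $\Delta$ in $S'$. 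As $S\simeq S'$ is incompressible we are in the latter case, and then $\Delta\cup D$ is an embedded $2$-sphere, which bounds a ball by irreducibility of $M$; tracing through the construction, this exhibits the surface attached to $C_0$ (the funnel of case~2, or the triangle cap of case~1) as a boundary-parallel piece that would already have been removed on passing to spun-normal form --- a contradiction. Hence $T$ is obtained from $S$ by an isotopy together with boundary annulus compressions alone.

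I expect the third paragraph to be the main obstacle: rigorously verifying that an essential spun-normal surface has no null-homotopic curves among $C$, and in particular pinning down the relation between the disk $\Delta\subset S'$ and the funnel or cap attached to $C_0$ in the geometry behind Theorem~\ref{get_any_yoshida}. Some further care is then needed to check that the innermost/outermost orderings of the compressions in that theorem do not reintroduce such curves, and that, with no case~2 curve present, no leftover boundary-parallel tori remain to be deleted at the end.
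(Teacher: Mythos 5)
Your proposal takes essentially the same route as the paper: isotope $S$ into (spun-)normal form via Walsh's theorem, build $T$ from the same quadrilateral data, invoke Theorem~\ref{get_any_yoshida}, and use incompressibility to rule out the disk compression moves so that only boundary annulus compressions remain. The paper's own proof is just the one-sentence observation that incompressibility leaves no disk compressions to make, so your more detailed version (including the irreducibility argument handling a would-be trivial compression) is, if anything, more careful than the original.
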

\begin{proof}
$S$ is incompressible, so there cannot be any disk compression moves to make, only boundary annulus compressions.
\end{proof}

\section{Further questions}
\begin{enumerate}
\item Are there any manifolds with essential surfaces that can be produced by compressions from only one of the spun-normal and twisted squares constructions and not the other? The difference seems to come down to the boundary annulus compressions.
\item Are there any manifolds with essential surfaces that can be produced only via the Culler-Shalen construction and not by either the twisted squares or spun-normal constructions? If so, does there exist some other way to construct surfaces using the deformation variety that does obtain all of the essential surfaces that the Culler-Shalen construction does?
\item Tillmann~\cite{tillmann_degenerations} also asks the reverse question to the above, whether there are closed essential surfaces that are not detected by the character variety but that are detected by the deformation variety (i.e. produced by the spun-normal construction in the case that the ideal point of the deformation variety does not correspond to an ideal point of the character variety)? We can expand the question to ask the same for twisted squares surfaces.
\item Is there an intrinsic interpretation (not depending on the triangulation of the manifold) of the orientation of curves formed from the boundaries of twisted squares on $\bdry M$ in a twisted squares surface? There are parallel curves with opposite orientations if and only if there are boundary annulus compressions to make in altering the spun-normal surface to the twisted squares form, and so understanding these orientations could help us answer the first question in this list.
\end{enumerate}
\bibliographystyle{hamsplain}
\bibliography{/Users/h/Math/research/henrybib}

\providecommand{\bysame}{\leavevmode\hbox to3em{\hrulefill}\thinspace}
\providecommand{\href}[2]{#2}
\begin{thebibliography}{1}

\bibitem{segerman_torus_bundles}
Henry Segerman, \emph{Detection of incompressible surfaces in hyperbolic
  punctured torus bundles}, \mbox{arXiv:math/0610302v2}.

\bibitem{handbook_shalen}
Peter Shalen, \emph{Representations of 3-manifold groups}, Handbook of
  Geometric Topology (R.B. Sher and R.J. Daverman, eds.), North Holland, first
  ed., 2001.

\bibitem{tillmann_degenerations}
Stephan Tillmann, \emph{{Degenerations of ideal hyperbolic triangulations}},
  \mbox{arXiv:math.GT/0508295}.

\bibitem{tillmann_norm_surf}
\bysame, \emph{{Normal surfaces in topologically finite 3-manifolds}},
  \mbox{arXiv:math.GT/0406271}.

\bibitem{walsh_incomp}
Genevieve~S. Walsh, \emph{Incompressible surfaces and spunnormal form},
  \mbox{arXiv:math/0503027}.

\bibitem{yoshida91}
Tomoyoshi Yoshida, \emph{On ideal points of deformation curves of hyperbolic
  3-manifolds with one cusp}, Topology \textbf{30} (1991), no.~2, 155--170.

\end{thebibliography}
\end{document}